\numberwithin{equation}{section}
\newcommand{\SL}{\operatorname{SL}}
\newcommand{\gsl}{\operatorname{\mathfrak{s}\mathfrak{l}}}
\newcommand{\GL}{\operatorname{GL}}
\newcommand{\PSL}{\operatorname{PSL}}
\newcommand{\Aut}{\operatorname{Aut}}
\newcommand{\cP}{\mathcal{P}}
\newcommand{\bC}{\mathbb{C}}
\newcommand{\bQ}{\mathbb{Q}}
\newcommand{\bR}{\mathbb{R}}
\newcommand{\bT}{\mathbb{T}}
\newcommand{\bZ}{\mathbb{Z}}
\newcommand{\ra}{\rightarrow}
\newcommand{\qand}{\quad \textrm{and} \quad}
\def\acts{\curvearrowright}
\newcommand\subsetsim{\mathrel{%
\ooalign{\raise0.2ex\hbox{$\subset$}\cr\hidewidth\raise-0.8ex\hbox{\scalebox{0.9}{$\sim$}}\hidewidth\cr}}}
\DeclareMathOperator{\Ad}{Ad}
\DeclareMathOperator{\trace}{tr}
\theoremstyle{theorem}
\newtheorem{theorem}{Theorem}[section]
\newtheorem{corollary}[theorem]{Corollary}
\newtheorem{proposition}[theorem]{Proposition}
\theoremstyle{definition}
\newtheorem{definition}{Definition}[section]
\newtheorem{remark}[theorem]{Remark}
\begin{document}

\title{Characteristic Polynomial Patterns in difference sets of matrices}

\author{Michael Bj\"orklund}
\address{Department of Mathematics, Chalmers, Gothenburg, Sweden}
\email{micbjo@chalmers.se}
\thanks{}

\author{Alexander Fish}
\address{School of Mathematics and Statistics, University of Sydney, Australia}
\curraddr{}
\email{alexander.fish@sydney.edu.au}
\thanks{}

\keywords{Ergodic Ramsey Theory, Measure rigidity}

\subjclass[2010]{Primary: 37A45; Secondary: 11P99, 11C99}

\date{}

\dedicatory{}

\begin{abstract}
We show that for every subset $E$ of positive density in the set of integer square-matrices with zero traces,
there exists an integer $k \geq 1$ such that the set of characteristic polynomials of matrices in $E-E$ contains the 
set of \emph{all} characteristic polynomials of integer matrices with zero traces and entries divisible by $k$.
Our theorem is derived from results by Benoist-Quint on measure rigidity for actions on homogeneous
spaces.
\end{abstract}

\maketitle

\section{Introduction}
We recall the celebrated Furstenberg-Sark\"ozy Theorem \cite{F81}, \cite{S}. Let $E_o \subset \bZ$ be a set
with
\[
\overline{d}_{\bZ}(E_o) = \varlimsup_{n \ra \infty} \frac{|E_o \cap [1,n]|}{n} > 0,
\]
and let $p \in \bZ[X]$ be a polynomial with $p(0) = 0$. Then, there exists $n \geq 1$ such that
\[
p(n) \in E_o - E_o = \{ x-y \, : \, x,y \in E_o \big\}.
\]
In other words, the difference set of any set of positive density in $\bZ$ contains "polynomial patterns".

In this paper, we establish an analogue of Furstenberg-Sark\"ozy Theorem for difference sets 
of matrices. Let $M_d(\bZ)$ denote the additive group of $d \times d$-integer matrices, and let 
$M_d^0(\bZ)$ denote the subgroup of $M_d(\bZ)$ consisting of matrices with zero trace. For a 
subset $E \subset M_d^{0}(\bZ)$, we define its \emph{upper asymptotic density} by
\[
\overline{d}(E) = \varlimsup_{n \ra \infty} \frac{|E \cap F_n|}{|F_n|},
\]
where $F_n 
= 
\big\{
A = (a_{ij}) \in M^{0}_d(\bZ) \, : \, |a_{ij}| \leq n, \, \enskip \textrm{for all $(i,j) \neq (d,d)$}
\big\}$. \\ 

The main result of this paper can be formulated as follows.
\begin{theorem}
\label{main}
For every integer $d \geq 2$ and $E \subset M_d^{0}(\bZ)$ with $\overline{d}(E) > 0$, there exists an integer $k \geq 1$ such that for every $f \in \bZ[X]$ of the form
\begin{equation}
\label{charpol}
f(X) = X^d + k^2 \cdot a_{d-2} X^{d-2} + \ldots + k^d \cdot a_o, \quad \textrm{where $a_o,\ldots, a_{d-2} \in \bZ$},
\end{equation}
there exists a matrix $A \in E - E$ such that $f$ is the characteristic polynomial of $A$.
\end{theorem}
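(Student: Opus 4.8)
The plan is to transfer the combinatorial problem to a question about orbit closures in a homogeneous space, following the standard correspondence-principle strategy and then invoking Benoist--Quint measure rigidity.

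\textbf{Step 1: Furstenberg correspondence.} First I would package the set $E \subset M_d^0(\bZ)$ into a dynamical system. The group $G = M_d^0(\bZ)$ acts on its own Bohr/profinite-type compactification, or more usefully one builds a probability measure-preserving system $(X,\mu,G)$ together with a set $B \subset X$ with $\mu(B) = \overline{d}(E) > 0$ such that for every finite collection $g_1,\dots,g_n \in G$ one has
\[
\overline{d}\big( (E-g_1) \cap \cdots \cap (E-g_n) \big) \geq \mu\big( g_1 B \cap \cdots \cap g_n B \big).
\]
In particular, a matrix $A$ lies in $E-E$ whenever $\mu(B \cap A^{-1}B) > 0$, i.e.\ whenever $A^{-1}$ (equivalently $A$, since $G$ is abelian) returns a positive-measure portion of $B$ to itself. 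So it suffices to show: for the right $k$, every $f$ of the form \eqref{charpol} is the characteristic polynomial of some $A \in G$ with $\mu(B \cap (B+A)) > 0$.

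\textbf{Step 2: From additive returns to a linear-algebraic orbit.} The key idea is that conjugation gives $\SL_d$ (or $\GL_d$, or an arithmetic subgroup thereof) an action by automorphisms on $M_d^0$, and characteristic polynomials are exactly the conjugation-invariants. So I would look at the set $R = \{ A \in G : \mu(B \cap (B+A)) > 0\}$ of "additive return vectors" and study its image under the conjugation action $\gamma \cdot A = \gamma A \gamma^{-1}$ for $\gamma$ ranging over $\SL_d(\bZ)$. Replacing $B$ by a suitable push-forward, one gets that $\gamma R$ is again a set of return vectors for a (possibly different but still positive-measure) system; the union $\bigcup_\gamma \gamma R$ then lies inside $E' - E'$ for an appropriate modified positive-density set, OR — more in the spirit of Benoist--Quint — one sets up a single homogeneous space $Y = (\SL_d \ltimes M_d^0)(\bR) / (\SL_d \ltimes M_d^0)(\bZ)$ and a stationary measure for a random walk on $\SL_d(\bZ) \ltimes M_d^0(\bZ)$ whose support generates a Zariski-dense subgroup, so that the measure-classification theorem of Benoist--Quint forces the relevant orbit closure to be a finite-index subgroup coset. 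The arithmetic of "finite index" is precisely what produces the divisibility-by-$k$ (hence the $k^2, \dots, k^d$) in \eqref{charpol}: the orbit closure of $0$ under the affine action contains the full lattice $k \cdot M_d^0(\bZ)$ for some $k$, and every zero-trace integer matrix with entries in $k M_d^0(\bZ)$ is conjugate, over $\bZ$, to a companion-type matrix realizing any admissible characteristic polynomial scaled by the appropriate powers of $k$ — this last point uses that the characteristic polynomial of $kC$ is $X^d + k^2 a_{d-2} X^{d-2} + \cdots + k^d a_0$ when $C$ has characteristic polynomial $X^d + a_{d-2}X^{d-2} + \cdots + a_0$, matching \eqref{charpol} exactly.

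\textbf{Step 3: Surjectivity of characteristic polynomials on the lattice coset.} Finally I would check that once we know $E-E$ contains $k \cdot M_d^0(\bZ)$ up to $\SL_d(\bZ)$-conjugacy (or contains a full positive-density-invariant subset thereof), every polynomial of the form \eqref{charpol} is hit: given target coefficients $a_0,\dots,a_{d-2}$, take the trace-zero companion matrix $C_f$ of $X^d + a_{d-2}X^{d-2}+\cdots+a_0$, which is an integer matrix, and observe $k C_f \in k M_d^0(\bZ)$ has characteristic polynomial exactly \eqref{charpol}. The only subtlety is ensuring the orbit-closure statement delivers $k M_d^0(\bZ)$ itself and not merely one conjugacy class inside it; this is where one needs the Benoist--Quint input in its sharp form (classification of stationary measures, plus equidistribution), applied to the semidirect product $\SL_d \ltimes M_d^0$ acting on the corresponding homogeneous space.

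\textbf{Main obstacle.} The hard part will be Step 2: setting up the random walk / stationary measure on the right homogeneous space so that (i) its support generates a Zariski-dense subsemigroup of $\SL_d \ltimes M_d^0$ — this requires that $B$ (hence $E$) is genuinely "spread out", which must be extracted from positive density via an averaging/pigeonhole argument over conjugates — and (ii) the Benoist--Quint classification applies and its output (a homogeneous sub-probability-measure) translates back into the concrete statement that $E-E \supseteq \{\gamma (kC) \gamma^{-1}\}$ for all companion matrices $C$. Controlling the non-abelian dynamics of the $\SL_d(\bZ)$-conjugation action on the abelian group $M_d^0(\bZ)$, and in particular ruling out that the stationary measure concentrates on a proper invariant subvariety (e.g.\ a single nilpotent orbit, or the zero section), is the crux; the trace-zero and divisibility bookkeeping around it is routine by comparison.
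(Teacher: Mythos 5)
Your Step 1 and Step 3 do match the paper: the correspondence principle reduces matters to showing that suitable return vectors lie in $E-E$, and the companion-matrix bookkeeping (char.\ poly.\ of $kC$ being $X^d+k^2a_{d-2}X^{d-2}+\cdots+k^da_0$, plus conjugation-invariance of the characteristic polynomial) is exactly how the paper deduces the theorem from its recurrence statement. But the core of the argument --- your Step 2 --- has a genuine gap, and it is set up on the wrong space. The measure-preserving system $(X,\mu,M_d^0(\bZ))$ produced by the correspondence principle carries no $\SL_d(\bZ)$-action, so there is no mechanism in your sketch for turning it into a random walk on the semidirect-product homogeneous space $(\SL_d\ltimes M_d^0)(\bR)/(\SL_d\ltimes M_d^0)(\bZ)$; the alternative you mention (pushing $B$ around and taking $\bigcup_\gamma \gamma R$ inside a ``modified'' positive-density set) does not work as stated, since each conjugate $\gamma^{-1}E\gamma$ gives a different correspondence system and you lose any uniform control. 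The paper's resolution is different and essential: the return-time function $A\mapsto\nu(B\cap A\cdot B)$ is positive definite on the abelian group $M_d^0(\bZ)$, so by Bochner's theorem it is the Fourier transform of a measure $\eta$ on the dual torus $T_d\cong M_d^0(\bR)/M_d^0(\bZ)$ with an atom at the trivial character; the canonical dual (adjoint) action of $\SL_d(\bZ)$ on $T_d$ is where the random walk is run, Ces\`aro averages of $\mu^{*n}*\eta$ produce a $\mu$-stationary measure $\xi$ still charging $\{1\}$, and Benoist--Quint is invoked in its torus-automorphism form (stationary ergodic measures are Haar or uniform on finite orbits), with strong irreducibility of the adjoint action supplied by Borel density and simplicity of $\gsl_d$.

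The second missing ingredient is the uniformity of $k$, which your phrase ``the orbit closure of $0$ contains $k\cdot M_d^0(\bZ)$'' does not deliver (note also that $0$ is a fixed point of the linear action, so that orbit-closure statement cannot be the right formulation, and Benoist--Quint does not say the stationary measure is supported on a finite-index subgroup coset). In the paper, the stationary measure decomposes as $\xi=q\,\delta_1+r\,m_{T_d}+(1-r-q)\sum_P q_P\nu_P$ where the sum may run over \emph{infinitely many} finite orbits; one chooses a finite family $F$ of orbits whose complementary mass is less than $q/(1-r-q)$, then a single integer $k$ annihilating all (finite-order) characters in the orbits of $F$, and estimates $|\widehat{\xi}(kA)|\geq q-(1-r-q)\sum_{P\notin F}q_P>0$ for every $A\in M_d^0(\bZ)$; the conjugating element $g$ only appears when transporting this nonvanishing back from $\xi$ to the original spectral measure $\eta$ through the Ces\`aro averages. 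Without this Fourier-side argument (or a substitute), your sketch establishes neither that a single $k$ works for all target polynomials nor how the Benoist--Quint classification translates into positivity of $\overline{d}(E\cap(E-gAg^{-1}))$, so the crux of the proof is still open in your proposal --- as you yourself flag under ``Main obstacle.''
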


By evaluating the characteristic polynomials for elements in $E-E$ at $X = 0$, we get the following corollary.

\begin{corollary}
For every integer $d \geq 2$ and $E \subset M_d^{0}(\bZ)$ with $\overline{d}(E) > 0$, the set
\[
D = \big\{ \det(A) \, : \, A \in E-E \big\} \subset \bZ
\]
contains a non-trivial subgroup. 
\end{corollary}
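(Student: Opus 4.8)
The plan is to deduce the corollary directly from Theorem~\ref{main}; all of the genuine content lies in that theorem, and the statement about determinants is a purely formal consequence obtained by reading off the constant term of a characteristic polynomial. First I would fix $E \subset M_d^0(\bZ)$ with $\overline{d}(E) > 0$ and let $k \geq 1$ be the integer supplied by Theorem~\ref{main}. For each $a \in \bZ$, consider the polynomial $f_a(X) = X^d + k^d \cdot a$: it has vanishing coefficients in degrees $d-1, d-2, \ldots, 1$ and constant term $k^d a$, hence is of the form \eqref{charpol} with $a_{d-2} = \cdots = a_1 = 0$ and $a_o = a$. Note in particular that the absence of an $X^{d-1}$ term in \eqref{charpol} is exactly the zero-trace condition, so these $f_a$ are legitimately of the prescribed shape. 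Theorem~\ref{main} then produces, for each $a$, a matrix $A_a \in E - E$ whose characteristic polynomial equals $f_a$.

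Next I would extract the determinant. Writing the characteristic polynomial as $\chi_{A_a}(X) = \det(X I - A_a)$ and evaluating at $X = 0$ gives $\chi_{A_a}(0) = \det(-A_a) = (-1)^d \det(A_a)$, while on the other hand $f_a(0) = k^d a$. Comparing, $\det(A_a) = (-1)^d k^d a$. Letting $a$ range over all of $\bZ$, we obtain $k^d \bZ \subseteq D$. Since $k \geq 1$, the set $k^d \bZ$ is a non-trivial subgroup of $\bZ$, which is the assertion of the corollary.

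There is essentially no obstacle here: the only points requiring care are the sign $(-1)^d$ in the relation between $\det(A_a)$ and the constant coefficient of $f_a$, and the verification that $f_a$ indeed fits the template \eqref{charpol}. Everything else is immediate once Theorem~\ref{main} is granted.
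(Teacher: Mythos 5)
Your proof is correct and is essentially the paper's own argument: the paper derives the corollary precisely by evaluating the characteristic polynomials supplied by Theorem~\ref{main} at $X=0$, which is what you do (with the sign $(-1)^d$ handled properly, so that $k^d\bZ \subseteq D$). No further comment is needed.
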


\begin{remark}
We note that for every $k \geq 1$, the subgroup $E = k \cdot M_d^{0}(\bZ) \subset M_d^0(\bZ)$ has 
positive density and all characteristic polynomials of elements $A \in E-E$ have the form  \eqref{charpol}. 
Hence, in this case, our theorem is sharp.
\end{remark}
It is worth pointing out that there are sets $E \subset M_d(\bZ)$ with 
\[
\overline{d}(E) = \varlimsup_{n \ra \infty} \frac{|E \cap G_n|}{|G_n|} > 0,
\]
where $G_n = 
\big\{
A = (a_{ij}) \in M_d(\bZ) \, : \, |a_{ij}| \leq n, \, \enskip \textrm{for all $(i,j)$}
\big\},
$
such that the set 
\[
T = \big\{ \trace(A-A') \, : \, A, A' \in E \big\} \subset \bZ
\] 
does not contain a non-trivial subgroup. In other words, there exists a subset $E \subset M_d(\bZ)$
of positive density with the property that the set of characteristic polynomials of elements in the difference set $E-E$ does not
contain the set
\[
C_k = \big\{ f \in \bZ[X] \, : \, f(X) = \det(X \cdot I - A), \enskip \textrm{for $A \in k \cdot M_d(\bZ)$}\big\},
\]
for any integer $k \geq 1$. Indeed, let $\alpha \in \bR$ be an irrational number and denote by
$I \subset \bR/\bZ$ an open interval such that the closure of $I-I \subset \bR/\bZ$ is a proper subset. Define 
\[
E_o = \big\{ n \in \bZ \, : \, n\alpha \enskip \textrm{mod} \: 1 \in I \big\} \subset \bZ
\]
and note that $E_o - E_o \subset \big\{ n \, : \, n \alpha \enskip \textrm{mod} \: \,1 \in I-I \big\} \subset \bZ$ does not contain a non-trivial subgroup. The set
\[
E = \big\{ A \in M_d(\bZ) \, : \, \trace(A) \in E_o \big\} \subset M_d(\bZ)
\]
satisfies $\overline{d}(E) > 0$, and $T = E_o - E_o$. \\

As another application of our main theorem, we prove a "sum-product" analogue of Bogolyubov's Theorem (see e.g. 
Theorem 7.2 in \cite{R}). 

\begin{corollary}
For every $E_o \subset \bZ$ with $\overline{d}_{\bZ}(E_o) > 0$, the set
\[
D = \big\{ xy-z^2 \, : \, x,y,z \in E_o-E_o \big\} \subset \bZ
\]
contains a non-trivial subgroup of $\bZ$.
\end{corollary}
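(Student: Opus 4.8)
The plan is to deduce this corollary from Theorem~\ref{main} in the case $d = 2$, after recognising $xy-z^2$ as a determinant. The elementary fact to exploit is that a traceless $2\times 2$ matrix $\left(\begin{smallmatrix} z & -y \\ x & -z \end{smallmatrix}\right)$ has determinant $xy-z^2$, and that the characteristic polynomial of any $A \in M_2^0(\bZ)$ is simply $X^2 + \det A$ (the linear term vanishes because the trace does). Accordingly I would set
\[
E = \left\{ \begin{pmatrix} c & -b \\ a & -c \end{pmatrix} \ : \ a, b, c \in E_o \right\} \subset M_2^0(\bZ).
\]
Then for $A, A' \in E$ one has $A - A' = \left(\begin{smallmatrix} c-c' & -(b-b') \\ a-a' & -(c-c') \end{smallmatrix}\right)$, so $\det(A-A') = (a-a')(b-b') - (c-c')^2$; as $A, A'$ range over $E$ the three differences run independently over $E_o - E_o$, so that $D = \{\det A : A \in E - E\}$. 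This turns the statement into a claim about the determinants of a difference set of matrices, which is exactly what Theorem~\ref{main} controls.

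The one point requiring care is that $E$ must have positive upper asymptotic density in $M_2^0(\bZ)$. Identifying $M_2^0(\bZ)$ with $\bZ^3$ via the entries in positions $(1,1),(1,2),(2,1)$, the window $F_n$ is the cube $\{|p|,|q|,|r| \le n\}$ and $E$ corresponds to $E_o \times (-E_o) \times E_o$, so $|E \cap F_n| = |E_o \cap [-n,n]|^3$ while $|F_n| = (2n+1)^3$. Since $|E_o \cap [-n,n]| \ge |E_o \cap [1,n]|$, this gives $\overline{d}(E) \ge \big(\tfrac12\,\overline{d}_{\bZ}(E_o)\big)^3 > 0$; alternatively one passes to a subsequence along which $|E_o \cap [1,n]| \gtrsim n$.

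Finally, applying Theorem~\ref{main} to this $E$ (with $d=2$) produces an integer $k \ge 1$ such that every polynomial $X^2 + k^2 a_0$ with $a_0 \in \bZ$ is the characteristic polynomial of some matrix in $E - E$, i.e.\ $k^2 a_0 \in D$ for every $a_0 \in \bZ$. Hence $D$ contains the non-trivial subgroup $k^2\bZ$, which is the assertion. I do not expect any genuine obstacle here: all the analytic content sits inside Theorem~\ref{main}, and the only steps needing (routine) verification are the choice of parametrization of $E$ that makes $\det$ on $E-E$ reproduce exactly $xy-z^2$ with the three variables free, and the density estimate above.
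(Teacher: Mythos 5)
Your proposal is correct and follows essentially the same route as the paper: parametrize traceless $2\times 2$ matrices by triples from $E_o$ (your $E$ differs from the paper's only by a harmless permutation of entries), check positive density in $M_2^0(\bZ)\cong\bZ^3$, and apply Theorem~\ref{main} with $d=2$ to get $k^2\bZ\subset D$ via $\det A = xy - z^2$. The paper leaves the density estimate as a one-line remark, which you verify explicitly; no gaps.
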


\begin{proof}
Fix a set $E_o \subset \bZ$ with $\overline{d}_{\bZ}(E_o) > 0$ and define the set
\[
E = \Big\{ 
\left(
\begin{array}{cc}
a & -b \\
c & -a
\end{array}
\right)
\, : \, 
a,b,c \in E_o \Big\} \subset M_2^0(\bZ).
\]
One can readily check that $\overline{d}(E) > 0$, and thus, by Theorem \ref{main}, there exists an integer $k \geq 1$
such that for every $f \in \bZ[X]$ of the form
\[
f(X) = X^2 + k^2 \cdot a_{0}, \quad \textrm{where $a_o \in \bZ$},
\]
there exists an element $A \in E-E$ with $f(X) = \det(X \cdot I-A)$. In particular, given any 
integer $a_o$, we can find a matrix
\[
A
=
\left(
\begin{array}{cc}
z & -y \\
x & -z
\end{array}
\right) 
\]
with $x,y,z \in E_o-E_o$, whose characteristic polynomial has the form $f(X) = X^2 + k^2 \cdot a_o$. Hence,
\[
f(0) = \det(-A) = xy - z^2 = k^2 \cdot a_o,
\]
which shows that $k^2 \cdot \bZ \subset D$.
\end{proof}

We note that Theorem \ref{main} is an immediate consequence of the following theorem.
\begin{theorem}
\label{prop1}
For every integer $d \geq 2$ and $E \subset M_{d}^0(\bZ)$ with $\overline{d}(E) > 0$, there exists an integer 
$k \geq 1$ such that for every $A \in k \cdot M_d^0(\bZ)$, we have
\[
\overline{d}\big(E \cap (E - gAg^{-1})\big) > 0, \quad \textrm{for some $g \in \SL_d(\bZ)$}.
\]
\end{theorem}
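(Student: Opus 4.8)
\textbf{Proof proposal for Theorem \ref{prop1}.}

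The plan is to follow the Furstenberg correspondence principle to transfer the combinatorial density statement into a statement about a measure-preserving system, and then to exploit Benoist--Quint measure rigidity for the $\SL_d(\bZ)$-action on a homogeneous space built from this system. First, given $E \subset M_d^0(\bZ)$ with $\overline{d}(E) > 0$, I would construct, via a standard correspondence argument applied to the averaging sets $F_n$, a probability measure $\mu$ on the compact space $X = \{0,1\}^{M_d^0(\bZ)}$ that is invariant under the translation action of the abelian group $\Gamma := M_d^0(\bZ)$, together with a clopen set $\cA \subset X$ (the cylinder ``origin coordinate equals $1$'') such that $\mu(\cA) \geq \overline{d}(E)$ and, for every finite collection $A_1,\dots,A_r \in \Gamma$,
\[
\overline{d}\Big( \bigcap_{i=1}^r (E - A_i) \Big) \;\geq\; \mu\Big( \bigcap_{i=1}^r (\cA - A_i) \Big).
\]
Thus it suffices to produce, for a suitable $k$, an element $g \in \SL_d(\bZ)$ with $\mu(\cA \cap (\cA - gAg^{-1})) > 0$ for every $A \in k\cdot M_d^0(\bZ)$. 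The key point is that $\SL_d(\bZ)$ acts on $\Gamma = M_d^0(\bZ)$ by conjugation, this action is by group automorphisms, and it commutes appropriately with the translation structure, so $\SL_d(\bZ)$ acts on the system $(X,\mu$-class$)$ in a way that lets us form a skew or semidirect combination.

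The heart of the argument is to pass to an $\SL_d(\bZ)$-invariant object. I would consider the group $G = \SL_d(\bR) \ltimes M_d^0(\bR)$ and the lattice $\Lambda = \SL_d(\bZ) \ltimes M_d^0(\bZ)$, so that $G/\Lambda$ is a homogeneous space carrying a natural $M_d^0(\bR)$-action (acting on the $\SL_d(\bR)$-nilpotent direction) and an $\SL_d(\bZ)$-action. The measure $\mu$ on $X$ should be induced up to a measure on a fiber bundle over $\SL_d(\bZ)\backslash\SL_d(\bR)$ or, more precisely, one builds a joining of $(X,\mu)$ with $G/\Lambda$ and invokes Benoist--Quint's classification of stationary (equivalently, ergodic invariant) measures for the action of Zariski-dense subgroups: the relevant invariant measures are homogeneous, supported on closed orbits of intermediate subgroups. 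Because $\SL_d$ acts on $M_d^0$ as an irreducible representation (the adjoint representation of $\gsl_d$, which has no nonzero proper invariant subspaces over $\bQ$), the only closed invariant subgroups of $M_d^0(\bR)$ stable under a Zariski-dense subgroup are $\{0\}$ and everything; arithmetically, the closed $\SL_d(\bZ)$-invariant subgroups of $M_d^0(\bZ)$ are exactly the $k\cdot M_d^0(\bZ)$. This dichotomy is what forces the ``entries divisible by $k$'' conclusion: the obstruction to $\mu(\cA \cap (\cA - gAg^{-1}))>0$ for all $g$ lives in a quotient $M_d^0(\bZ)/H$ for some finite-index $\SL_d(\bZ)$-invariant subgroup $H$, and every such $H$ contains $k\cdot M_d^0(\bZ)$ for $k = [M_d^0(\bZ):H]$ (or a bounded multiple thereof).

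Concretely, the steps are: (1) set up the correspondence principle to get $(X,\Gamma,\mu,\cA)$; (2) observe $\SL_d(\bZ) \acts \Gamma$ by conjugation preserving the combinatorial structure, and encode the data as a measure on $G/\Lambda$ or a bundle over $\SL_d(\bZ)\backslash\SL_d(\bR)$ that is $\SL_d(\bZ)$-invariant and $M_d^0(\bR)$-quasi-invariant; (3) decompose this measure into ergodic components for the $\SL_d(\bZ)$-action and apply Benoist--Quint to show each component is homogeneous, hence its projection to the $M_d^0$-direction is (a translate of) Haar measure on a closed invariant subgroup; (4) by irreducibility of the adjoint representation, identify these subgroups with the $k_j\cdot M_d^0(\bZ)$, and let $k$ be the least common multiple of the finitely many $k_j$ that occur with positive weight; (5) for $A \in k\cdot M_d^0(\bZ)$, the function $g \mapsto \mu(\cA \cap (\cA - gAg^{-1}))$ has positive integral over $\SL_d(\bZ)$ against the homogeneous measure (this reduces, after the identification, to the trivial fact that $A$ acts trivially on the relevant compact abelian quotient, so the cylinder returns to itself), whence it is positive for some $g$; (6) feed this back through the correspondence inequality to conclude $\overline{d}(E \cap (E - gAg^{-1})) > 0$.

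I expect the main obstacle to be step (2)--(3): correctly packaging the correspondence-principle measure $\mu$ together with the conjugation action into a single measure on a homogeneous space to which the Benoist--Quint theorems literally apply, and verifying that the resulting stationary/invariant measure is not concentrated on a ``degenerate'' piece where the $M_d^0$-direction collapses entirely (which would only yield $k$ with $k\cdot M_d^0(\bZ) = \{0\}$, i.e. nothing). Handling this requires using $\mu(\cA) > 0$ to guarantee a non-degenerate ergodic component and then exploiting $\bQ$-irreducibility of $\Ad$ on $\gsl_d$ to rule out intermediate subgroups; the arithmetic bookkeeping that turns ``closed invariant subgroup of finite index'' into ``contains $k\cdot M_d^0(\bZ)$'' is routine once the dynamical input is in place. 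The polynomial pattern conclusion of Theorem \ref{main} then follows because $\{\,gAg^{-1} : g \in \SL_d(\bZ),\, A \in k\cdot M_d^0(\bZ)\,\}$ realizes, as $A$ ranges over $k\cdot M_d^0(\bZ)$, every characteristic polynomial of the form \eqref{charpol} — indeed conjugation preserves characteristic polynomials and every monic degree-$d$ polynomial with zero $X^{d-1}$-coefficient and $i$-th coefficient divisible by $k^{i}$ is the characteristic polynomial of some companion-type matrix in $k\cdot M_d^0(\bZ)$.
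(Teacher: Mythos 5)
There is a genuine gap at the heart of your steps (2)--(3), which you yourself flag as ``the main obstacle'': there is no way to package the correspondence-principle system into a measure on $G/\Lambda$ with $G = \SL_d(\bR)\ltimes M_d^0(\bR)$ to which Benoist--Quint applies. The measure produced by Furstenberg correspondence lives on the symbolic space $\{0,1\}^{M_d^0(\bZ)}$ and is an essentially arbitrary measure-preserving $\bZ^{d^2-1}$-system; it carries no homogeneous structure, and Benoist--Quint classifies stationary measures on homogeneous spaces (in particular tori with automorphism actions), not joinings of arbitrary systems with them. The paper's resolution is not a joining or induction at all: it is a harmonic-analytic transfer. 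One observes that $A \mapsto \nu(B \cap A\cdot B)$ is positive definite on the abelian group $H_d = M_d^0(\bZ)$, so by Bochner's theorem it is the Fourier transform of a probability measure $\eta$ on the dual group $T_d = \widehat{H}_d \cong M_d^0(\bR)/M_d^0(\bZ)$, which \emph{is} a torus on which $\SL_d(\bZ)$ acts by automorphisms (the dual of the conjugation action); the mean ergodic theorem gives $\eta(\{1\})>0$. This spectral step is the missing idea in your outline. Moreover, $\eta$ is neither $\SL_d(\bZ)$-invariant nor stationary, so one cannot apply measure rigidity to it directly; the paper averages $\eta$ along the convolution powers of a finitely supported generating measure $\mu$ on $\SL_d(\bZ)$ and takes a weak* limit, which is $\mu$-stationary and retains an atom at the trivial character because $1$ is a fixed point. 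Your step (5), which asks for ``positive integral over $\SL_d(\bZ)$ against the homogeneous measure,'' is not meaningful as stated ($\SL_d(\bZ)$ is discrete, non-amenable, and carries no invariant probability measure); the random-walk averaging is exactly what replaces it, and it is why the conclusion only asserts positivity for \emph{some} conjugate $gAg^{-1}$.

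A second concrete problem is your step (4): after decomposing into ergodic components you take ``the least common multiple of the finitely many $k_j$ that occur with positive weight,'' but there may be countably infinitely many finite-orbit (torsion) components with positive weight, so no single lcm exists. The paper handles this quantitatively: writing the stationary measure as $q\,\delta_1 + r\, m_X + (1-r-q)\sum_P q_P \nu_P$ (Haar plus counting measures on finite orbits, by Benoist--Quint together with the strong irreducibility of the adjoint action), it chooses a finite family $F$ of orbits whose complementary mass is less than $q/(1-r-q)$, picks $k$ annihilating the (finite-order) characters in the orbits of $F$, and bounds $|\widehat{\xi}(kA)| \geq q - (1-r-q)\sum_{P\notin F} q_P > 0$ using the atom at $1$ to dominate the tail. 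So while your high-level ingredients (correspondence principle, Benoist--Quint, irreducibility of $\Ad$ on $\gsl_d$, divisibility by $k$ coming from finite-index invariant subgroups, and the final reduction of Theorem \ref{main} via companion matrices) match the paper's, the two decisive mechanisms --- the Bochner/spectral passage to the dual torus with the subsequent random-walk stationarization, and the tail estimate against the atom at $1$ replacing a finite lcm --- are absent, and without them the argument as proposed does not go through.
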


\begin{proof}[Proof of Theorem \ref{main} using Theorem \ref{prop1}]
Fix $d \geq 2$ and $a_o,\ldots, a_{d-2} \in \bZ$ and pick an element $A_o \in M_d^0(\bZ)$ whose 
characteristic polynomial $f_o$ has the form
\[
f_o(X) = X^d + a_{d-2} X^{d-2} + \ldots + a_o.
\]
Fix a set $E \subset M_d^0(\bZ)$ with $\overline{d}(E) > 0$ and use Theorem \ref{prop1} to find an integer 
$k \geq 1$ such that, for every $A \in k \cdot M_d^0(\bZ)$, we have
\[
E \cap (E-gAg^{-1}) \neq \emptyset, \quad \textrm{for some $g \in \SL_d(\bZ)$}.
\]
In particular, we can take $A = k \cdot A_o$, and we conclude that $k \cdot gA_o g^{-1} \in E - E$. Since the
characteristic polynomial $f$ of $k \cdot A_o$ (and $k \cdot gA_o g^{-1}$) equals 
\[
f(X) = X^d + k^2 \cdot a_{d-2} X^{d-2} + \ldots + k^d \cdot a_o,
\]
and $a_o,\ldots, a_{d-2}$ are arbitrary integers, we are done.
\end{proof}

We now say a few words about the strategy of the proof of Theorem \ref{prop1}. The basic steps can be
summarized as follows:
\begin{itemize}
\item In Section \ref{section1} we reduce the theorem to a problem concerning recurrence of $\SL_d(\bZ)$-conjugation
orbits in $M_d^{0}(\bZ)$.
\item In Section \ref{section2} we show that this kind of recurrence can be linked to the behavior of random walks on 
$\SL_d(\bZ)$ acting on the dual group of $M_d^{0}(\bZ)$. 
\item In Section \ref{section3} and Section \ref{section4} we use the work on measure rigidity by Benoist-Quint \cite{BQ} to 
establish the necessary recurrence.
\end{itemize}

\section{Proof of Theorem \ref{prop1}}
\label{section1}
Let $H_d = M_d^{0}(\bZ)$ and recall that the \emph{dual} $T_d$ of $H_d$ is defined 
as the multiplicative group of all homomorphisms $\chi : H_d \ra \bT$, where 
$\bT = \{ z \in \bC^{*} \, : \, |z| = 1 \}$. We note that $T_d$ is a compact metrizable abelian
group and that we have a natural isomorphism 
$M_d^0(\bR)/M_d^{0}(\bZ) \ra T_d$ given by $\Theta \mapsto \chi_\Theta$, where 
\[
\chi_\Theta(A) = e^{2\pi i \trace(\Theta^t A)}, \quad \textrm{for $A \in H_d$}.
\]
We denote by $1$ the trivial character on $T_d$ (the one corresponding to $\Theta = 0$), and
we let $\cP(T_d)$ denote the space of Borel probability measures on $T_d$. \\

Given $A \in H_d$, we define $\phi_A(\chi) = \chi(A)$ for $\chi \in T_d$, and given a Borel probability 
measure $\eta$ on $T_d$, we define its \emph{Fourier transform} $\widehat{\eta}$ by
\[
\widehat{\eta}(A) = \int_{T_d} \phi_A(\chi) \, d\eta(\chi) = \int_{T_d} \chi(A) \, d\eta(\chi), \quad \textrm{for $A \in H_d$}.
\]
The following proposition implies Theorem \ref{prop1}.
\begin{proposition}
\label{prop2}
For every $d \geq 2$ and $\eta \in \cP(T_d)$ with $\eta(\{1\}) > 0$, there exists $k \geq 1$ such that 
for every $A \in k \cdot M_d^0(\bZ)$, we have
\[
\widehat{\eta}(gAg^{-1}) \neq 0, \quad \textrm{for some $g \in \SL_d(\bZ)$}.
\]
\end{proposition}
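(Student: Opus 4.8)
The plan is to read off the non-vanishing of $\widehat{\eta}$ along a conjugation orbit from the long-time behaviour of a random walk on $\SL_d(\bZ)$ acting on $T_d$, and to control that behaviour through the Benoist--Quint classification of stationary measures. Fix a symmetric, finitely supported probability measure $\mu$ on $\SL_d(\bZ)$ whose support generates a subgroup that is Zariski dense in $\SL_d$. Under the identification $T_d\cong M_d^{0}(\bR)/M_d^{0}(\bZ)$, the character dual of the $\SL_d(\bZ)$-conjugation action on $H_d$ is the action $g\cdot\chi_\Theta=\chi_{\sigma(g)\Theta\sigma(g)^{-1}}$ with $\sigma(g)=(g^{-1})^{t}$; since $\sigma$ is an automorphism of $\SL_d$ preserving $\SL_d(\bZ)$, this is, up to an automorphism, the adjoint action, and it preserves the lattice $M_d^{0}(\bZ)$, so $\SL_d(\bZ)$ acts on the torus $T_d$ by continuous automorphisms. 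Writing $g_{*}\eta$ for the push-forward of $\eta$ under $\chi\mapsto g\cdot\chi$, one checks directly that $\widehat{g_{*}\eta}(A)=\widehat{\eta}(g^{-1}Ag)$; hence, with $\mu^{*n}$ the $n$-th convolution power of $\mu$, $\mu^{*n}*\eta:=\int g_{*}\eta\,d\mu^{*n}(g)$, and using that $\mu$ (hence $\mu^{*n}$) is symmetric,
\[
\widehat{\mu^{*n}*\eta}(A)=\int_{\SL_d(\bZ)}\widehat{\eta}(gAg^{-1})\,d\mu^{*n}(g),\qquad n\ge0 .
\]

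Consequently, if $A\in M_d^{0}(\bZ)$ satisfies $\widehat{\eta}(gAg^{-1})=0$ for every $g\in\SL_d(\bZ)$, then $\widehat{\mu^{*n}*\eta}(A)=0$ for all $n$, and therefore $\widehat{\nu}(A)=0$ for every weak-$*$ subsequential limit $\nu$ of the Ces\`aro averages $\eta_N:=\frac1N\sum_{n<N}\mu^{*n}*\eta$. I would fix one such limit $\nu$, along a subsequence chosen once and for all; it then suffices to produce an integer $k\ge1$ with $\widehat{\nu}(A)\neq0$ for every $A\in k\cdot M_d^{0}(\bZ)$, for then no such $A$ can have $\widehat{\eta}(gAg^{-1})=0$ for all $g$, which is precisely the assertion of Proposition~\ref{prop2}. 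Two features of $\nu$ will be used. It is $\mu$-stationary, by the usual Krylov--Bogolyubov computation ($\mu*\eta_N-\eta_N=\tfrac1N(\mu^{*N}*\eta-\eta)\to0$ in total variation along the chosen subsequence). And, since the trivial character $1\in T_d$ is a fixed point of the action and $\eta\ge\eta(\{1\})\,\delta_{1}$ with $c:=\eta(\{1\})>0$, we have $\mu^{*n}*\eta\ge c\,\delta_{1}$ for every $n$, hence $\nu\ge c\,\delta_{1}$ and in particular $\nu(\{1\})\ge c$.

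The heart of the matter is the structure of $\nu$. The adjoint representation of $\SL_d$ on $\Lie(\SL_d)=M_d^{0}(\bR)$ is irreducible, and $\SL_d(\bZ)$ is Zariski dense in $\SL_d$ by Borel density, so the action of $\SL_d(\bZ)$ on $M_d^{0}(\bR)$ is strongly irreducible; it is moreover proximal (a regular diagonal element with well-separated eigenvalues has a unique eigenvalue of maximal modulus on $M_d^{0}(\bR)$), and the Zariski closure of its image is semisimple. Under these hypotheses the measure-rigidity results of Benoist--Quint~\cite{BQ} classify the $\mu$-stationary probability measures on $T_d$: every such measure is a convex combination
\[
\nu=a\cdot\lambda_{T_d}+\sum_{j\ge0}b_{j}\,\delta_{\chi_{j}},
\]
where $\lambda_{T_d}$ is the Haar measure of $T_d$, the $\chi_{j}$ are distinct rational points of $T_d$ (characters of finite order) forming finite $\SL_d(\bZ)$-orbits, $b_{j}>0$, and $a+\sum_{j}b_{j}=1$. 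Since $\nu(\{1\})\ge c$ we may take $\chi_{0}=1$ and $b_{0}\ge c$. I expect the main obstacle to be exactly this step: checking that the present situation — where the acting group is not all of $\SL_{d^2-1}(\bZ)$ but the Zariski-dense image of $\SL_d(\bZ)$ under the adjoint representation — meets the hypotheses required by the relevant form of the Benoist--Quint theorem.

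Finally, choose $k$. For $A\neq0$ the Haar part contributes nothing, as $\widehat{\lambda_{T_d}}(A)=\int_{T_d}\phi_A\,d\lambda_{T_d}=0$ ($\phi_A$ being a non-trivial character of $T_d$), so $\widehat{\nu}(A)=\sum_{j\ge0}b_{j}\,\chi_{j}(A)$. Let $m_{j}$ be the order of $\chi_{j}$ and write $\chi_{j}=\chi_{\Theta_{j}}$ with $m_{j}\Theta_{j}\in M_d^{0}(\bZ)$. If $m_{j}\mid k$, then for every $A=kB$ with $B\in M_d^{0}(\bZ)$ one has $\trace(\Theta_{j}^{t}A)=\tfrac{k}{m_{j}}\trace\big((m_{j}\Theta_{j})^{t}B\big)\in\bZ$, that is, $\chi_{j}(A)=1$. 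Since $\sum_{j}b_{j}\le1$ and every $m_{j}$ divides some factorial, the sums over $\{j:m_{j}\mid n!\}$ increase to $\sum_j b_j$ as $n\to\infty$, so one may choose $k$ (for instance a large enough factorial) with $\sum_{m_{j}\nmid k}b_{j}<c/2$. Then for every non-zero $A\in k\cdot M_d^{0}(\bZ)$,
\[
\re\widehat{\nu}(A)=\sum_{m_{j}\mid k}b_{j}+\sum_{m_{j}\nmid k}b_{j}\,\re\chi_{j}(A)\ \ge\ b_{0}-\sum_{m_{j}\nmid k}b_{j}\ \ge\ c-\sum_{m_{j}\nmid k}b_{j}\ >\ c-\tfrac{c}{2}\ >\ 0 ,
\]
while $\widehat{\nu}(0)=1$. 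Hence $\widehat{\nu}$ does not vanish on $k\cdot M_d^{0}(\bZ)$, as required. The only point needing extra care beyond the rigidity input is that $\nu$ may carry infinitely many atoms, so that no single denominator suffices; taking $k$ highly divisible absorbs all but an arbitrarily small fraction of the total atomic mass, which is all that is needed.
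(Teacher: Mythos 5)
Your proposal is correct and takes essentially the same route as the paper: form the Ces\`aro averages of the random-walk convolutions $\mu^{*n}*\eta$, pass to a weak* limit to get a $\mu$-stationary measure retaining an atom of mass $\geq\eta(\{1\})$ at the trivial character, invoke Benoist--Quint rigidity (with strong irreducibility checked via Borel density and irreducibility of the adjoint representation) to write it as Haar plus atoms at finite-order points lying on finite orbits, and then choose a highly divisible $k$ so that the atoms whose order divides $k$ carry all but an arbitrarily small part of the atomic mass, forcing $\widehat{\nu}$ (hence, by your contradiction argument, $\widehat{\eta}$ along some conjugate) to be nonzero on $k\cdot M_d^0(\bZ)$. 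The only substantive point you fold into the ``classification'' that the paper proves separately (Proposition~\ref{BQ1}) is that there are countably many finite orbits and that their points are torsion --- exactly the step you flag as the expected obstacle --- and your factorial choice of $k$ plays the role of the paper's common annihilator for a finite family of orbits capturing most of the atomic mass.
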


\begin{proof}[Proof of Theorem \ref{prop1} using Proposition \ref{prop2}]
By the proof of Furstenberg's Correspondence Principle (see Section 1, \cite{F77}) 
for the countable \emph{abelian} 
group $H_d = M_d^0(\bZ)$, we can find a compact metrizable space $Z$, equipped
with an action of $H_d$ on $Z$ by homeomorphisms, denoted by $(A,z) \mapsto A \cdot z$, a $H$-invariant (not necessarily
ergodic) Borel probability measure $\nu$ on $Z$ and a Borel set $B \subset Z$ with $\nu(B) > 0$
such that
\[
\overline{d}\big(E \cap (E - A)\big) \geq \nu(B \cap A \cdot B), \quad \textrm{for all $A \in H_d$}.
\]
We note that $A \mapsto \nu(B \cap A \cdot B)$ is a positive definite function on $H_d$, and 
thus, by Bochner's Theorem (Theorem 4.18 in \cite{Fo}), we can find a probability measure $\eta$ on the dual 
group $T_d = \widehat{H}_d$, such that
\[
\frac{\nu(B \cap A \cdot B)}{\nu(B)} = \widehat{\eta}(A) = \int_{T_d} \chi(A) \, d\eta(\chi), \quad \textrm{for all $A \in H_d$}. 
\]
Furthermore, by the weak Ergodic Theorem, using the fact that $\nu(B) > 0$, we have $\eta(\{1\}) > 0$. By Proposition \ref{prop2}, we can find an integer $k \geq 1$ such that for every $A \in k \cdot H_d$, we have
\[
 \widehat{\eta}(gAg^{-1}) \neq 0, \quad \textrm{for some $g \in \SL_d(\bZ)$}
\]
and thus, $\nu(B \cap (gAg^{-1}) \cdot B) > 0$, and 
\[
\overline{d}\big(E \cap (E - gAg^{-1})\big) 
\geq 
\nu(B \cap (gAg^{-1}) \cdot B) > 0,
\]
for some $g \in \SL_d(\bZ)$, which finishes the proof.
\end{proof}

\section{Stationary measures and the proof of Proposition \ref{prop2}}
\label{section2}
The main point of this section is to show that it suffices to establish Proposition 
\ref{prop2} for a more restrictive class of Borel probability measures on $\bT_d$. \\

Let $\mu$ be a probability measure on $\SL_d(\bZ)$. We say that $\mu$ is 
\emph{generating} if its support generates $\SL_d(\bZ)$ as a semigroup, and
we say that $\mu$ is \emph{finitely supported} if its support is finite. Given an
integer $n \geq 1$, we define 
\[
\mu^{*n}(g) = \sum_{g_1 \cdots g_n = g} \mu(g_1) \mu(g_2) \cdots  \mu(g_n), \quad
\textrm{for $g \in \SL_d(\bZ)$},
\]
where the sum is taken over all $n$-tuples $(g_1,\ldots,g_n)$ in $\SL_d(\bZ)$ such that 
$g_1 \ldots g_n = g$. Recall that $T_d = \widehat{H}_d$, and $\SL_d(\bZ)$ acts on 
$T_d$ by
\[
\big(g \cdot \chi\big)(A) = \chi(g^{-1} A g), \quad \textrm{for $A \in \SL_d(\bZ)$ and $\chi \in T_d$}.
\]
We note that this induces a weak*-homeomorphic action of $\SL_d(\bZ)$ on the space $\cP(T_d)$ of Borel
probability measures on $T_d$ (which we shall here think of as elements in the dual of the space 
$C(T_d)$ of continuous functions on $T_d$) by
\[
\int_{T_d} \phi(\chi) \, d(g \cdot \eta)(\chi) = \int_{T_d} \phi(g \cdot \chi) \, d\eta(\chi),
\quad \textrm{for $\eta \in \cP(T_d)$ and $\phi \in C(T_d)$}.
\]
Furthermore, we define the Borel probability measure $\mu * \eta$ on $T_d$ by
\[
\int_{T_d} \phi(\chi) \, d(\mu * \eta)(\chi) =  \sum_{g \in \SL_d(\bZ)}
\Big( \int_{T_d} \phi(g \cdot \chi) \, d\eta(\chi) \Big) \cdot \mu(g), \quad \textrm{for $\phi \in C(T_d)$}.
\]
In particular, given $A \in M_d^0(\bZ)$, we let $\phi_A$ denote the character on $T_d$ given by
$\phi_A(\chi) = \chi(A)$ for $\chi \in T_d$, and we note that
\begin{eqnarray*}
\widehat{\mu * \eta}(A) 
&=& 
\int_{T_d} \phi_A(\chi) \, d(\mu * \eta)(\chi) 
= \sum_{g \in \SL_d(\bZ)} \int_{T_d} \chi(g^{-1}Ag) \, d\eta(\chi) \, d\mu(g) \\
&=& \sum_{g \in \SL_d(\bZ)} \widehat{\eta}(g^{-1}Ag) \cdot \mu(g), \quad \textrm{for all $A \in M_d^0(\bZ)$}.
\end{eqnarray*}
We say that a Borel probability measure $\xi$ on $T_d$ is \emph{$\mu$-stationary} if $\mu * \xi = \xi$. It is 
not hard to prove (see e.g. Proposition 3.3, \cite{BF3}) that the set $\cP_\mu(T_d)$ of $\mu$-stationary Borel probability measures
on $T_d$ is never empty, and the measure class of any element $\xi \in \cP_\mu(T_d)$ is invariant under 
the semi-group generated by the support of $\mu$. If $\mu$ is a generating measure on $\SL_d(\bZ)$, we 
say that an element $\xi \in \cP_\mu(T_d)$ is \emph{ergodic} if a $\SL_d(\bZ)$-invariant Borel set in $T_d$
is either $\xi$-null or $\xi$-conull. \\

The following proposition implies Proposition \ref{prop2}.

\begin{proposition}
\label{prop3}
There exists a finitely supported probability measure $\mu$ on $\SL_d(\bZ)$ whose support generates
$\SL_d(\bZ)$ with the property that for every $\xi \in \cP_\mu(T_d)$ with $\xi(\{1\}) > 0$, there exists
an integer $k \geq 1$ such that for every $A \in k \cdot M_d^0(\bZ)$, we have
\[
\widehat{\xi}(g^{-1}Ag) \neq 0, \quad \textrm{for some $g \in \SL_d(\bZ)$}.
\]
\end{proposition}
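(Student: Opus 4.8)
The plan is to reduce the statement about an arbitrary $\mu$-stationary measure $\xi$ with $\xi(\{1\})>0$ to a statement about $\SL_d(\bZ)$-\emph{invariant} (equivalently $\SL_d(\bR)$-invariant, after passing to a limit) measures on $T_d$, and then to classify those invariant measures using the Benoist--Quint measure rigidity theorems. First I would fix, once and for all, a finitely supported generating measure $\mu$ on $\SL_d(\bZ)$ which is, say, symmetric and whose support contains a Zariski-dense subset realising enough unipotents; the precise requirement (proximality / strong irreducibility of the linear action on $M_d^0(\bR)\cong \gsl_d(\bR)$ under the adjoint/conjugation representation) will be dictated by the hypotheses of the Benoist--Quint results cited in Sections \ref{section3}--\ref{section4}. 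Next, given $\xi\in\cP_\mu(T_d)$ with $\xi(\{1\})>0$, I would decompose $\xi$ into $\mu$-ergodic components; since $\{1\}$ is an $\SL_d(\bZ)$-fixed point, the atom at $1$ is itself a $\mu$-stationary (indeed invariant) component, and the remaining mass is carried by ergodic stationary measures on $T_d\setminus\{1\}$. Because $\widehat{\xi}(B)=\xi(\{1\})+\sum_j c_j\,\widehat{\xi_j}(B)$, it suffices to control, for each ergodic component $\xi_j$ on $T_d\setminus\{1\}$, the set of $B\in M_d^0(\bZ)$ on which $\widehat{\xi_j}$ can vanish along the whole conjugation orbit — or rather to show that adding the positive atom $\xi(\{1\})$ guarantees $\widehat{\xi}(g^{-1}Ag)\neq 0$ for suitable $g$ and all $A$ in some sublattice $k\cdot M_d^0(\bZ)$.

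The key step is the structure of ergodic $\mu$-stationary measures on $T_d=M_d^0(\bR)/M_d^0(\bZ)$ under the conjugation action. Here I would invoke Benoist--Quint: for the linear $\SL_d$-action on the torus $T_d$ coming from the adjoint representation on $\gsl_d$, every ergodic $\mu$-stationary measure is either finitely supported (hence supported on rational points, i.e.\ on $\frac1N M_d^0(\bZ)/M_d^0(\bZ)$ for some $N$) or is the Haar measure of a closed $\SL_d(\bR)$-invariant subtorus. The rational/finite components are the dangerous ones: a finitely supported stationary measure $\sigma$ on $\frac1N M_d^0(\bZ)/M_d^0(\bZ)$ has $\widehat{\sigma}(A)=0$ precisely for $A$ outside a sublattice determined by $N$, so one wants to extract a common modulus from all such components. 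The point is that a stationary measure supported on points of denominator dividing $N$ is a convex combination of orbital averages over finite $\SL_d(\bZ)$-orbits of characters $\chi_\Theta$ with $N\Theta\in M_d^0(\bZ)$, and on such an orbit $\widehat{\sigma}(A)=\frac1{|\mathcal O|}\sum_{\chi\in\mathcal O}\chi(A)$ is a ratio of Gauss-type sums which is nonzero whenever $A\in N!\cdot M_d^0(\bZ)$ (or more crudely $A\in N\cdot M_d^0(\bZ)$, forcing every $\chi(A)=1$). For the Haar-on-subtorus components, $\widehat{\sigma}$ is supported on a proper sublattice $M_d^0(\bZ)\cap V^\perp$, but conjugating moves that sublattice around and — this is where I would use that there are only finitely many closed $\SL_d(\bR)$-invariant subtori, or more honestly, that the only $\SL_d(\bR)$-invariant subspace of $\gsl_d(\bR)$ is $0$ by irreducibility — so actually \emph{no} nontrivial Haar-on-subtorus component exists, and the only non-atomic ergodic stationary measure is full Haar measure, whose Fourier transform vanishes off $\{0\}$ but never hurts once we know $\widehat{\xi}(A)$ has a strictly positive contribution $\xi(\{1\})>0$ from the atom. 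Thus $\widehat{\xi}(A)=\xi(\{1\})+(\text{bounded error from finitely supported components})$, and one chooses $k$ so that on $k\cdot M_d^0(\bZ)$ all the finitely supported components also contribute nonnegatively (each $\chi(A)=1$), giving $\widehat{\xi}(g^{-1}Ag)\ge \xi(\{1\})>0$ with $g=e$.

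The main obstacle I anticipate is the first bullet — getting a \emph{clean classification} of the ergodic $\mu$-stationary measures on the torus $T_d$ for the conjugation (adjoint) action. The Benoist--Quint theorems are stated for actions on homogeneous spaces $G/\Lambda$ and for linear actions on tori $\bR^n/\bZ^n$ via a subgroup of $\SL_n(\bZ)$; here $n=d^2-1$ and the relevant subgroup is the image of $\SL_d(\bZ)$ under $\Ad$, which is Zariski-dense in $\PGL_d\subset\SL_{d^2-1}$, and one must check that their hypotheses (the acting group has no proper invariant subtori, strong irreducibility, or the ``no common fixed point on the projectivisation'' conditions) are met by the adjoint representation of $\SL_d$. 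A secondary difficulty is bookkeeping the denominators: one must ensure that a \emph{single} $k$ works simultaneously for the finitely many finitely-supported ergodic components appearing in the decomposition of a given $\xi$ — this is where finiteness of the number of relevant orbits (equivalently, a uniform bound on the denominators, which should come from the fact that a $\mu$-stationary finitely supported measure is supported on a single finite $\SL_d(\bZ)$-orbit and these have bounded ``level'' once the mass of $\xi$ off $\{1\}$ is bounded away from being spread too thin) has to be argued carefully, likely by an exhaustion/compactness argument rather than an explicit bound. I would defer both of these to Sections \ref{section3} and \ref{section4}, which the excerpt flags as the home of the Benoist--Quint input.
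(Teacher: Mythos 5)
Your overall route is the same as the paper's: invoke the Benoist--Quint classification (with the strong irreducibility of the adjoint action ruling out proper invariant subtori), decompose $\xi$ into the atom at $1$, a Haar piece, and pieces carried by finite $\SL_d(\bZ)$-orbits consisting of finite-order characters, then choose $k$ so that the finite-orbit characters are trivial on $k\cdot M_d^0(\bZ)$ and conclude with $g=e$. However, there is a genuine gap at the step where you produce a single $k$. You write that one must handle ``the finitely many finitely-supported ergodic components'' and suggest that the denominators (levels) of the finite orbits appearing in $\xi$ are uniformly bounded, to be argued by exhaustion/compactness. This is false in general: the ergodic decomposition of a stationary $\xi$ with $\xi(\{1\})=q>0$ may charge \emph{countably infinitely many} finite orbits of unbounded order. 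For instance, $\xi=q\,\delta_1+(1-q)\sum_{j\ge 1}2^{-j}\nu_{P_j}$, with $P_j$ finite orbits whose elements have order growing in $j$, is stationary (indeed invariant), and no single $k$ makes every $\chi(k\cdot A)=1$ for all $\chi$ in all components. So your plan of choosing $k$ so that \emph{all} finitely supported components ``contribute nonnegatively'' cannot be carried out, and no uniform bound on levels is available to rescue it.

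The missing idea is a quantitative truncation that uses the strict positivity of the atom to absorb a small, possibly negative or complex, tail. Writing $\xi = q\,\delta_1 + r\, m_X + (1-r-q)\sum_{P\ne\{1\}} q_P\,\nu_P$, the weights $q_P$ are summable, so one can choose a \emph{finite} set $F$ of finite orbits with $\sum_{P\notin F} q_P < q/(1-r-q)$, pick $k$ so that $\chi^k=1$ only for the $\chi$ lying in orbits $P\in F$ (possible since each such orbit is finite and consists of finite-order characters), and then estimate the remaining components trivially via $|\widehat{\nu}_P(k\cdot A)|\le 1$. This yields $|\widehat{\xi}(k\cdot A)| \ge q - (1-r-q)\sum_{P\notin F} q_P > 0$ for all $A\in M_d^0(\bZ)$, which is exactly how the paper closes the argument. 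In short: you do not need (and cannot get) all components to be nonnegative on $k\cdot M_d^0(\bZ)$; you only need the finitely many heavy ones to be, with the light tail dominated by $\xi(\{1\})$. A secondary, smaller point: the claim that finitely supported (finite-orbit) stationary components consist of rational, i.e.\ finite-order, characters is itself not immediate from the Benoist--Quint theorem and requires the lattice/invariant-subgroup argument the paper supplies (via strong irreducibility and Borel density); since you defer that to Sections \ref{section3}--\ref{section4}, it is acceptable, but it should be flagged as a needed input rather than a consequence of the classification alone.
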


\begin{proof}[Proof of Proposition \ref{prop2} using Proposition \ref{prop3}]
Pick $\eta \in \cP(T_d)$ with $\eta(\{1\}) > 0$, and write
\[
\eta = \lambda \cdot \delta_1 + (1-\lambda) \cdot \eta_o, \quad \textrm{for some $0 < \lambda \leq 1$}, 
\]
where $\eta_o(\{1\}) = 0$. Since $1$ is fixed by the $\SL_d(\bZ)$-action, we have
\[
\mu^{*n} * \eta = \lambda \cdot \delta_1 + (1-\lambda) \cdot \mu^{*n} * \eta_o, \quad \textrm{for every $n \geq 1$},
\]
and thus
\[
\eta_N = \frac{1}{N} \sum_{n=1}^{N} \mu^{*n} * \eta 
= 
\lambda \cdot \delta_1 + (1-\lambda) \cdot \frac{1}{N} \sum_{n=1}^N \mu^{*n} * \eta_o, \quad \textrm{for every $N \geq 1$}.
\]
Since $\cP(T_d)$ is weak*-compact, we can find a subsequence $(N_j)$ such that $\eta_{N_j}$ converges to a 
probability measure $\xi$ on $T_d$ in the weak*-topology, which must be $\mu$-stationary and satisfy the 
bound $\xi(\{1\}) \geq \lambda > 0$. By Proposition \ref{prop3}, there exists an integer $k \geq 1$ such that 
for every $A \in k \cdot H_d$, we have
\[
\widehat{\xi}(g^{-1}Ag) \neq 0, \quad \textrm{for some $g \in \SL_d(\bZ)$}.
\]
We now claim that for every $A \in k \cdot H_d$, we have 
\[
\widehat{\eta}(g^{-1}Ag) \neq 0, \quad \textrm{for some $g \in \SL_d(\bZ)$}. 
\]
Indeed, suppose that this is not the case, so that $\widehat{\eta}(g^{-1}Ag) = 0$ for all $g \in \SL_d(\bZ)$, and thus
\[
\widehat{\eta}_N(h^{-1}Ah) =  \frac{1}{N} \sum_{n=1}^N \sum_{g \in \SL_d(\bZ)}\widehat{\eta}(g^{-1}h^{-1}Ah g) \cdot \mu^{*n}(g) = 0, 
\]
for all $N \geq 1$ and $h \in \SL_d(\bZ)$. Since $\eta_{N_j} \ra \xi$ in the weak*-topology, we conclude that 
we must have $\widehat{\xi}(h^{-1} A h) = 0$ for all $h \in \SL_d(\bZ)$, which is a contradiction.
\end{proof}

\section{Measure rigidity and the proof of Proposition \ref{prop3}}
\label{section3}

\begin{definition}
\label{nice}
Let $X$ be a compact \emph{abelian} group and $\Gamma < \Aut(X)$. Let $\mu$ be a generating probability measure on $\Gamma$.
We say that the action of $\Gamma$ on $X$ is \emph{$\mu$-nice} if the following conditions are satisfied:
\begin{itemize}
\item Every ergodic and $\mu$-stationary Borel probability measure on $X$ is either the Haar measure $m_X$ or supported on a finite 
$\Gamma$-orbit in $X$.
\item There are only countably many finite $\Gamma$-orbits in $X$, and each element in a finite $\Gamma$-orbit
has finite order.
\end{itemize}
In particular, by the ergodic decomposition for $\mu$-stationary Borel probability measures, see e.g. Proposition 
3.13, \cite{BF3}, if the $\Gamma$-action is $\mu$-nice, then every $\mu$-stationary (not necessarily ergodic) 
Borel probability measure 
$\xi$ on $X$ can be written as
\[
\xi = r \cdot m_X + (1-r) \cdot \sum_{P} q_P \cdot \nu_P, \quad \textrm{for some $0 \leq r \leq 1$},
\]
where $\nu_P$ denotes the counting probability measure on a finite $\Gamma$-orbit $P \subset X$, and 
$q_P$ are non-negative real numbers such that $\sum_P q_P = 1$.
\end{definition}

In order to prove Proposition \ref{prop3}, we shall need the following "measure rigidity" result, which will
be proved in Section \ref{section4} using results by Benoist-Quint \cite{BQ}.

\begin{proposition}
\label{prop4}
For every finitely supported generating probability measure $\mu$ on $\SL_d(\bZ)$, the dual 
action $\SL_d(\bZ) \acts T_d$ is $\mu$-nice.
\end{proposition}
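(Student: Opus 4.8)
The plan is to identify $T_d = \widehat{H}_d$ with the torus $M_d^0(\bR)/M_d^0(\bZ)$ via $\Theta \mapsto \chi_\Theta$, and to note that under this identification the $\SL_d(\bZ)$-action $g \cdot \chi_\Theta = \chi_{g\Theta g^{-1}}$ (up to transpose, which is harmless after an inner automorphism) is precisely the action of $\SL_d(\bZ)$ on the torus $\mathbb{T}^N = V/\Lambda$, where $V = M_d^0(\bR)$ carries the adjoint representation of $\SL_d$ and $\Lambda = M_d^0(\bZ)$ is a lattice preserved by $\SL_d(\bZ)$. Since the adjoint representation of $\SL_d$ on $\gsl_d$ is irreducible over $\bQ$ (it is absolutely irreducible for $d \geq 2$), the only $\SL_d(\bZ)$-invariant $\bQ$-subspaces of $V$ are $0$ and $V$; in particular $V$ contains no nonzero proper rational invariant subspace, so the action on the torus has no nontrivial invariant subtori. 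This is exactly the algebraic hypothesis needed to invoke the Benoist--Quint measure classification for $\mu$-stationary measures on a torus $V/\Lambda$ under a subgroup $\Gamma < \SL(\Lambda)$ whose Zariski closure is semisimple (here the Zariski closure of $\SL_d(\bZ)$ is $\SL_d$, which is semisimple).

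First I would invoke the main theorem of \cite{BQ} in the toral case: for a finitely supported generating measure $\mu$ on $\SL_d(\bZ)$, every $\mu$-stationary ergodic probability measure on $\mathbb{T}^N$ is either the Haar measure or is supported on a finite $\Gamma$-orbit consisting of rational points, i.e.\ torsion points of the torus. This gives the first bullet of Definition \ref{nice} directly, and it also shows that every element of a finite $\Gamma$-orbit is a torsion element of $T_d$, hence of finite order. Second, for the countability statement in the second bullet: a finite $\Gamma$-orbit is contained in the set of points fixed by some finite-index subgroup of $\SL_d(\bZ)$; since any torsion point of $T_d = M_d^0(\bR)/M_d^0(\bZ)$ has the form $\frac{1}{q}\Theta + M_d^0(\bZ)$ with $q \geq 1$ and $\Theta \in M_d^0(\bZ)$, the set of all torsion points is countable, and a fortiori the union of all finite $\Gamma$-orbits is countable. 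I would spell out that a finite orbit must consist of torsion points: if $\chi_\Theta$ has finite orbit, its stabilizer $\Gamma_0$ has finite index in $\SL_d(\bZ)$, hence is Zariski-dense, so $\Theta$ is fixed mod $M_d^0(\bZ)$ by a Zariski-dense set of $g$; writing this as $g\Theta g^{-1} - \Theta \in M_d^0(\bZ)$ for all such $g$ and using that $\Theta \mapsto g\Theta g^{-1} - \Theta$ is linear, one deduces (again from rational irreducibility of the adjoint representation, which forces the only vector fixed by a finite-index subgroup to be $0$, applied to the rational vector space $V$) that the $\bQ$-span of $\{g\Theta g^{-1} : g\}$ modulo $\Theta$ is trivial enough to force $\Theta$ rational, i.e.\ $\chi_\Theta$ torsion.

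The main obstacle is checking that the hypotheses of the Benoist--Quint theorem are genuinely met for our specific action, and quoting the correct version of their result. Two points deserve care: (i) the Zariski closure of the group $\Gamma$ generated by $\supp(\mu)$ in $\GL(V)$ must be semisimple with no invariant subtori — this is where irreducibility of the adjoint representation over $\bQ$ (and the absence of proper rational invariant subspaces) is essential, and I would state this as an algebraic lemma about $\gsl_d$; (ii) one must confirm that \cite{BQ} applies to all finitely supported generating $\mu$ (not merely to $\mu$ with, say, a finite exponential moment — which is automatic for finitely supported $\mu$) and yields the dichotomy Haar-or-finite-orbit on the nose, including the torsion conclusion for the finite orbits. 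Modulo locating and citing the precise statement in \cite{BQ}, the rest is the short torus-arithmetic bookkeeping sketched above. I expect this to be entirely routine once the citation is pinned down, which is presumably the content of Section \ref{section4}.
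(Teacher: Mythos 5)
Your proposal is correct and follows essentially the same route as the paper: identify $T_d$ with $M_d^0(\bR)/M_d^0(\bZ)$ carrying the adjoint action, verify (strong) irreducibility via Zariski density of $\SL_d(\bZ)$ and simplicity of $\gsl_d$, invoke the Benoist--Quint classification to get the Haar-or-finite-orbit dichotomy, and then check that finite orbits consist of torsion points and are countable. The only divergence is cosmetic: where you argue torsion via Zariski density of the stabilizer plus the absence of nonzero fixed vectors (forcing $\Theta$ rational) and countability via countability of torsion points, the paper instead shows that the kernel of a character fixed by a finite-index subgroup is a finite-index $\Gamma_o$-invariant subgroup of $\Lambda$ and counts finite-index subgroups of $\bZ^n$ --- both arguments are sound and interchangeable.
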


\begin{proof}[Proof of Proposition \ref{prop3} using Proposition \ref{prop4}]
Fix $\xi \in \cP_\mu(T_d)$ with $\xi(\{1\}) = q > 0$ and a finitely supported generating probability measure 
$\mu$ on $\SL_d(\bZ)$. Since the dual action of $\SL_d(\bZ)$ on $T_d$ is $\mu$-nice by Proposition \ref{prop4}, 
we can write $\xi$ as
\[
\xi = q \cdot \delta_1 + r \cdot m_X + (1-r-q) \cdot \sum_{P \neq \{1\}} q_P \cdot \nu_P, 
\]
for some $r \geq 0$ with $0 < r + q \leq 1$, where $\nu_P$ and $q_P$ are as in Definition \ref{nice}, and thus
\[
\widehat{\xi} = q + r \cdot \delta_0 + (1-r-q) \cdot \sum_{P \neq \{1\}} q_P \cdot \widehat{\nu}_P.
\]
If $q + r = 1$, then $\widehat{\xi}(A) \geq q > 0$ for every $A \in H_d$, so we may assume from now on that 
the inequalities $0 < r+q < 1$ hold. Since $(q_P)$ is summable, we can find a finite subset $F$ of the set of finite $\SL_d(\bZ)$-orbits in $T_d$ such that
\[
\sum_{P \notin F} q_P < \frac{q}{1-r-q}.
\]
Since the action is $\mu$-nice, we note that, for each finite $\SL_d(\bZ)$-orbit $P$, every element in $P$ has 
finite order, and thus we can find an integer $n_P$ such that $\chi^{n_P} = 1$ for all $\chi \in P$. Since $F$ is 
finite, we can further find an integer $k$ such that $\chi^k = 1$ for every $\chi \in P$ and for every $P \in F$.
Hence, $\chi(k \cdot A) = 1$ for all $A \in H_d$ and for every $\chi \in P$ and for every $P \in F$, and thus
\[
\widehat{\nu}_P(k \cdot A) = \frac{1}{P} \sum_{\chi \in P} \chi(k \cdot A) = 1, \quad \textrm{for all $A \in H_d$}.
\]
We conclude that
\[
\widehat{\xi}(k \cdot A) = q + (1-r-q) \cdot \sum_{P \in F} q_P + (1-r-q) \cdot \sum_{P \notin F} q_P \cdot \widehat{\nu_P}(k \cdot A),
\]
for every non-zero $A \in H_d$, and thus
\[
\big| \widehat{\xi}(k \cdot A) \big| \geq q - (1-r-q) \cdot \sum_{P \notin F} q_P > 0,
\]
since $|\widehat{\nu}_P(A)| \leq 1$ for every $A \in H_d$, which finishes the proof.
\end{proof}

\section{Proof of Proposition \ref{prop4}}
\label{section4}
Let us briefly recall the setting so far. We have
\[
H_d = M_d^0(\bZ) \qand T_d = \widehat{H}_d \cong M_d^0(\bR) / M_d^0(\bZ)  
\]
and a polynomial homomorphism $\Ad : \SL_d(\bR) \ra \GL(M_d^0(\bR))$ defined by
\[
\Ad(g)A = (g^{t})^{-1}Ag^t, \quad \textrm{for $g \in \SL_d(\bR)$ and $A \in M_d^0(\bR)$},
\]
where $g^t$ denotes the transpose of $g$. \\

We note that $\Ad(g)M_d^0(\bZ) = M_d^0(\bZ)$ for all 
$g \in \SL_d(\bZ)$ and thus we can define a homeomorphic action of the group $\SL_d(\bZ)$ on $M_d^0(\bR) / M_d^0(\bZ)$ by
\[
g \cdot (A+M_d^0(\bZ)) = \Ad(g)A + M_d^{0}(\bZ), \quad \textrm{for $A + M_d^0(\bZ) \in M_d^0(\bR) / M_d^0(\bZ)$}.
\]
We note that this action of $\SL_d(\bZ)$ is isomorphic to the one on $T_d$ via the map $\Theta \mapsto \chi_\Theta$
introduced in Section \ref{section2}. 

We wish to prove that for every finitely supported generating probability measure 
$\mu$ on the group $\Ad(\SL_d(\bZ)) < \Aut(T_d)$, the action on $T_d$ is $\mu$-nice. \\

This is a special case of the following more general setting. Let $V$ be a real finite-dimensional
vector space and suppose that $\rho : \SL_d(\bR) \ra \GL(V)$ is a polynomial homomorphism 
defined over $\bQ$ and set $\Gamma = \rho(\SL_d(\bZ))$. Let $\Lambda < V$ be a
subgroup which is isomorphic to $\bZ^{n}$, where $n = \dim_{\bR}(V)$, so that the quotient 
group $X = V/\Lambda$ is compact. In the setting described above, we have
\[
V = M_d^{0}(\bR) \qand \Lambda = M_d^{0}(\bZ) \qand \rho = \Ad \qand n = d^2-1.
\]
Recall that the action of a subgroup $G < \GL(V)$ is \emph{irreducible} if it does not admit any
non-trivial proper $G$-invariant subspaces, and we say that it is \emph{strongly irreducible} if
the action of any finite-index subgroup of $G$ is irreducible. The following theorem of 
Benoist-Quint (Theorem 1.3, \cite{BQ}) will be the main technical ingredient in the proof of Proposition \ref{prop4}.
\begin{theorem}
\label{BQ}
Let $\mu$ be a finitely supported generating probability measure on $\Gamma$ and suppose that
$\Gamma \acts V$ is strongly irreducible. Then a $\mu$-stationary ergodic probability measure
on $X$ is either the Haar measure on $X$ or the counting probability measure on some finite 
$\Gamma$-orbit in $X$.
\end{theorem}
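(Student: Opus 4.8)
The plan is to follow the scheme of Benoist and Quint, isolating a routine reduction, a zero--one dichotomy for conditional measures, and the \emph{exponential drift}, which is where the real content lies. Fix a $\mu$-stationary ergodic probability measure $\nu$ on $X = V/\Lambda$; the task is to show that $\nu$ is the Haar measure $m_X$ or the uniform measure on a finite $\Gamma$-orbit, and strong irreducibility of $\Gamma \acts V$ will be used only at the very end, to rule out proper $\Gamma$-invariant subspaces of $V$. I would first dispose of the atomic case directly: if $\nu$ has atoms, let $c>0$ be the largest atomic mass; $\mu$-stationarity forces the finite set of atoms of mass $c$ to be $\Gamma$-invariant, and ergodicity then pins $\nu$ down to the uniform measure on a single finite $\Gamma$-orbit inside it, which is indeed stationary. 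So it suffices to prove: if $\nu$ is non-atomic, then $\nu = m_X$.

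Next I would pass to the skew-product over the Bernoulli system $(B,\beta)$, $B = \Gamma^{\bN}$, $\beta = \mu^{\otimes\bN}$, and let $\widetilde\nu$ be the natural invariant lift of $\nu$ to $B \times X$. Using the Lyapunov exponents $\lambda_1 > \cdots > \lambda_r$ of $\mu$ on $V$ and the associated Oseledets splitting $V = \bigoplus_i V_i(b)$, the cosets of the expanding subspace $V^{u}(b) = \bigoplus_{\lambda_i>0} V_i(b)$ descend to a measurable ``unstable'' partition of $X$, along which I would disintegrate $\widetilde\nu$ to get conditional measures $\nu^{u}_{b,x}$ supported on the unstable leaves. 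A zero--one law (an entropy argument in the spirit of Ledrappier--Young) then gives the dichotomy: either $\nu^{u}_{b,x}$ is a single atom for $\widetilde\nu$-a.e.\ $(b,x)$, or it is non-atomic for $\widetilde\nu$-a.e.\ $(b,x)$. In the first case a separate argument --- exploiting the group structure of $X$ and strong irreducibility, and requiring care at the zero-exponent ``central'' directions --- shows that $\nu$ must already be atomic, which by the reduction lands us in the finite-orbit alternative; so it remains to handle the second case.

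The drift is the crux. Since $\nu^{u}_{b,x}$ is non-atomic, for $\widetilde\nu$-a.e.\ $(b,x)$ I can pick a second point $x' = x + w$ on the same unstable leaf with $0 \neq w \in V^{u}(b)$. Applying the random product $b_1\cdots b_n$ expands $w$, and after renormalising to unit size its direction converges into a single Lyapunov subspace $V_j$. Following Benoist--Quint, I would run a random ``time change'' $n = n(t,b)$ keeping the renormalised displacement of prescribed order $e^{t}$ while $b_1\cdots b_n$ is steered towards a prescribed group element, and pass to a weak-$*$ limit along a subsequence --- here $X$ is compact, so no escape of mass has to be ruled out --- obtaining \emph{non-zero} translation invariance of the limiting conditional measures along a subspace of some $V_j$. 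An equivariance-and-maximality argument for the family of such invariance subspaces, combined with strong irreducibility, then forces invariance under all of $X$, i.e.\ $\nu = m_X$, as required.

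The routine parts are the reduction and, modulo the standard entropy toolkit, the dichotomy; the genuine difficulty is the drift --- both its non-degeneracy, namely that the limiting displacement is not $0$ (this is precisely where the non-atomicity of the conditional measures, equivalently strict positivity of ``unstable entropy'', is consumed, and it dictates the exact form of the time change), and the subsequent promotion of a single translation invariance to a $\Gamma$-invariant-subtorus invariance, which is the only place where strong irreducibility finally bites. A secondary obstacle is the atomic branch of the dichotomy, whose conclusion fails for projective space and so genuinely exploits the torus structure. By contrast, because $X = V/\Lambda$ is compact, the non-escape-of-mass estimates that dominate the general homogeneous-space version of this theorem are not needed here.
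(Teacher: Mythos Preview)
The paper does not prove this statement at all: Theorem~\ref{BQ} is quoted verbatim as Theorem~1.3 of Benoist--Quint \cite{BQ} and used as a black box in the proof of Proposition~\ref{prop4}. So there is no ``paper's own proof'' to compare against; the authors simply import the result.

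What you have written is a reasonable high-level outline of the actual Benoist--Quint argument (skew-product lift, Oseledets splitting, conditional measures along unstable leaves, the atomic/non-atomic dichotomy, and the exponential drift producing translation invariance which strong irreducibility then promotes to full Haar). As a sketch it is broadly faithful to \cite{BQ}, though a few remarks are in order. First, your claim that strong irreducibility is used ``only at the very end'' is not quite accurate: in the Benoist--Quint scheme it also enters earlier, for instance in controlling the Lyapunov spectrum (ensuring the top exponent is positive and simple via Furstenberg--Guivarc'h--Raugi) and in the equivariance arguments for the stabiliser subspaces. Second, the ``atomic branch of the dichotomy'' that you flag as a secondary obstacle is genuinely delicate and your one-line description (``a separate argument exploiting the group structure'') hides substantial work; in the original paper this is where the interplay with the central directions and the random walk on the space of lattices is handled. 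Third, the time-change you describe is not quite a random stopping time steering towards a prescribed group element, but rather a carefully chosen sequence of times along which the displacement vector, after rescaling, equidistributes in a controllable way.

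None of this is a fatal gap in a sketch at this level of detail, but you should be aware that for the purposes of the present paper no such argument is required: the theorem is cited, not proved.
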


Given a subset $Y \subset \GL(V)$, we denote by $\overline{Y}^{Z}$ the Zariski closure of $Y$.
The following proposition provides a condition which ensures that $\Gamma$ acts strongly 
irreducibly on $V$.

\begin{proposition}
\label{BQ1}
Suppose that $\overline{\Gamma}^{Z} = G < \GL(V)$ is a Zariski-connected group which acts irreducibly on $V$. Then, $\Gamma$ acts strongly irreducibly on $V$, and for every finite-index subgroup $\Gamma_o < \Gamma$, any non-trivial $\Gamma_o$-invariant subgroup of $\Lambda$ has finite index. 

Furthermore, there are countably many finite $\Gamma$-orbits
in $X$, and for every finite $\Gamma$-orbit $P \subset X$ there exists an integer $n$ such that $\chi^n = 1$
for all $\chi \in P$.
\end{proposition}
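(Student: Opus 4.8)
The plan is to extract all three conclusions from the single hypothesis that $G = \overline{\Gamma}^Z$ is Zariski-connected and acts irreducibly on $V$. First I would prove strong irreducibility. If $\Gamma_o < \Gamma$ has finite index, its Zariski closure $\overline{\Gamma_o}^Z$ is an algebraic subgroup of $G$ of finite index, hence (since $G$ is Zariski-connected) equal to $G$. Any $\Gamma_o$-invariant subspace $W \subseteq V$ is then invariant under $\overline{\Gamma_o}^Z = G$, because the stabilizer of a subspace is Zariski-closed; irreducibility of $G$ on $V$ forces $W = 0$ or $W = V$. This gives strong irreducibility of $\Gamma \acts V$.

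Next, the statement about invariant subgroups of $\Lambda$. Let $\Gamma_o < \Gamma$ be finite-index and $M < \Lambda$ a non-trivial $\Gamma_o$-invariant subgroup. The real span $W = \linspan_\bR(M) \subseteq V$ is then $\Gamma_o$-invariant and non-zero, so by the strong irreducibility just proved, $W = V$. Hence $M$ is a subgroup of $\Lambda \cong \bZ^n$ whose real span is all of $V$, i.e. $M$ contains $n$ $\bR$-linearly independent vectors; a standard fact about lattices then gives that $M$ has finite index in $\Lambda$. (Concretely, $M$ is itself a lattice of full rank $n$ in $V$, and two full-rank lattices one contained in the other differ by a finite index equal to the ratio of covolumes.)

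For the last assertions, suppose $P \subset X = V/\Lambda$ is a finite $\Gamma$-orbit, say $P = \Gamma \cdot x$ with $|P| = m$. The stabilizer $\Gamma_x = \{\gamma \in \Gamma : \gamma x = x\}$ has index $m$ in $\Gamma$. Lift $x$ to $v \in V$; then for every $\gamma \in \Gamma_x$ we have $\rho(\gamma)v - v \in \Lambda$. Iterating over the finitely many cosets one sees that each $\gamma \in \Gamma$ satisfies $\rho(\gamma)(m! \cdot v) - (m! \cdot v) \in \Lambda$ after passing to a suitable finite-index subgroup — more cleanly, let $\Gamma_o = \Gamma_x \cap \gamma_1 \Gamma_x \gamma_1^{-1} \cap \cdots$ over coset representatives, a finite-index normal subgroup fixing every point of $P$. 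Then for each $y = w + \Lambda \in P$ and each $\gamma \in \Gamma_o$, $\rho(\gamma)w - w \in \Lambda$, so the subgroup $M = \linspan_\bZ\{\rho(\gamma)w - w : \gamma \in \Gamma_o,\ w \text{ a lift of a point of } P\}$ is a $\Gamma_o$-invariant subgroup of $\Lambda$; if it is non-trivial it has finite index $N$ by the previous paragraph, and then $N \cdot y$ is $\Gamma_o$-fixed in $X$, which combined with the finiteness of the $\Gamma/\Gamma_o$-orbit gives that $y$ has finite order. The case $M = 0$ means each point of $P$ is already $\Gamma_o$-fixed modulo $\Lambda$ with trivial displacement, i.e. genuinely fixed, and again finiteness of $\Gamma/\Gamma_o$ bounds the order. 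This produces a common $n$ with $\chi^n = 1$ on $P$. Countability of the set of finite orbits follows since each finite orbit consists of torsion points of $X$, and $X$ has only countably many torsion points.

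The main obstacle I expect is the bookkeeping in the last step: turning "$\Gamma_o$ fixes $P$ pointwise" into a uniform torsion bound requires carefully choosing the normal finite-index subgroup $\Gamma_o$ and controlling the displacement cocycle $\gamma \mapsto \rho(\gamma)w - w$, and one must be slightly careful that the relevant invariant subgroup $M$ of $\Lambda$ is genuinely either trivial or full-rank — it is the strong irreducibility applied to $\linspan_\bR(M)$ that rules out intermediate cases, so the argument genuinely uses the hypothesis rather than being purely formal. Everything else is standard algebraic-group and lattice theory.
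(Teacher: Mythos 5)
Your first two steps are correct and coincide with the paper's own argument: the Zariski closure of a finite-index subgroup $\Gamma_o$ equals the connected group $G$, so a $\Gamma_o$-invariant subspace is $G$-invariant and irreducibility gives strong irreducibility; and for a non-trivial $\Gamma_o$-invariant subgroup $M<\Lambda$, the real span of $M$ is a non-zero $\Gamma_o$-invariant subspace, hence all of $V$, so $M$ is a full-rank sublattice and has finite index.

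The gap is in the final step, the passage from ``finite $\Gamma$-orbit'' to ``torsion point''. After choosing a finite-index subgroup $\Gamma_o$ fixing every point of $P$ and forming the displacement subgroup $M=\linspan_{\bZ}\{\rho(\gamma)w-w\}\subseteq\Lambda$, you conclude from ``$M$ has finite index $N$'' that ``$N\cdot y$ is $\Gamma_o$-fixed, which combined with the finiteness of the $\Gamma/\Gamma_o$-orbit gives that $y$ has finite order.'' This is not a valid deduction: $y$ itself is already $\Gamma_o$-fixed by construction, so the assertion about $N\cdot y$ carries no new information, and ``fixed by a finite-index subgroup and lying on a finite $\Gamma$-orbit'' is exactly the hypothesis you started from, not a reason for torsion (a point of $V/\Lambda$ fixed by a group of automorphisms need not be rational). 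The same objection applies to your $M=0$ case: a vector of $V$ genuinely fixed by $\Gamma_o$ need not lie in $\bQ\Lambda$, and ``finiteness of $\Gamma/\Gamma_o$ bounds the order'' is unjustified. What is missing is a second use of irreducibility. On the torus side one can repair it as follows: the $\Gamma_o$-fixed subspace of $V$ is $\Gamma_o$-invariant and proper (since $\Gamma_o$ acts non-trivially), hence $\{0\}$; choose $\gamma_1,\dots,\gamma_r\in\Gamma_o$ with $\bigcap_i \ker(\rho(\gamma_i)-I)=\{0\}$, observe that $(\rho(\gamma_i)-I)x=\rho(\gamma_i)w-w\in\Lambda$ is a linear system with rational coefficients and a unique solution, so its solution $w$ lies in $\bQ\Lambda$ and $y$ is torsion. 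The paper instead argues on the dual side: a $\Gamma_o$-fixed point of $X=\widehat{\Lambda}$ is a character $\chi_o$ with $\chi_o(\gamma^{-1}A\gamma-A)=1$ for all $A\in\Lambda$, $\gamma\in\Gamma_o$, so $\ker\chi_o$ is a $\Gamma_o$-invariant subgroup of $\Lambda$, non-trivial because $\Gamma_o$ does not act trivially on $\Lambda$; by the finite-index statement it has finite index $N$, whence $\chi_o^N=1$ directly. Your countability argument (finite orbits consist of torsion points, and $X$ has only countably many torsion points) is fine once torsion has actually been established.
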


\begin{proof}
Suppose that $\Gamma_o$ is a finite-index subgroup of $\Gamma$ and let $U < V$ be a non-trivial $\Gamma_o$-invariant subspace. Since $G$ is connected it must also be equal to the Zariski closure of $\Gamma_o$, and thus $U$
is also fixed by $G$ (since $\rho$ is a polynomial map and being invariant subspace is an algebraic condition). Hence, $U = V$. This shows that $\Gamma$ acts strongly irreducibly. 

Now suppose that $\Lambda_o < \Lambda$ is a non-trivial $\Gamma_o$-invariant subgroup. Since 
$\Lambda$ is assumed to be isomorphic to $\bZ^n$ for some $n$, and every subgroup of a free abelian
group is free, we can find a $\bZ$-basis $e_1,\ldots,e_m$ of $\Lambda_o$, and one readily checks that
the real subspace
\[
U := \bR e_1 \oplus \ldots \oplus \bR e_m < V
\]
is $\Gamma_o$-invariant as well. Since the $\Gamma$-action on $V$ is assumed to be strongly irreducible, 
we can conclude that $U = V$, and thus $m = n$. From this it follows that the subgroup $\Lambda_o$ has finite index in $\Lambda \cong \bZ^n$. 

Now suppose that $P \subset X$ is a finite $\Gamma$-orbit, and pick $\chi_o \in P$. We note that there exists a 
finite-index subgroup $\Gamma_o$ of $\Gamma$ which fixes $\chi_o$, and thus the kernel $\Lambda_o = \ker \chi_o$ is a non-trivial $\Gamma_o$-invariant subgroup of $\Lambda$. Hence, from the previous paragraph, it must have finite
index in $\Lambda$, and thus $\chi_o$ has finite order in $X$. Since there are only countably many finite-index 
subgroups of $\Lambda \cong \bZ^n$, we conclude that there are only countably many choices of elements $\chi_o$ 
in $X$ which belong to a finite $\Gamma$-orbit, and thus there are at most countably many finite $\Gamma$-orbits in $X$. 
\end{proof}

\begin{corollary}
\label{niceness}
Let $\mu$ be a finitely supported generating probability measure on $\Gamma$ and suppose that
$\Gamma \acts V$ is strongly irreducible. Then the $\Gamma$-action on $X$ is $\mu$-nice.
\end{corollary}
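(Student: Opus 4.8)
The plan is to deduce Corollary \ref{niceness} directly from Theorem \ref{BQ} together with Proposition \ref{BQ1}, once the hypothesis of the corollary is shown to supply everything needed. The statement to prove is that, for a finitely supported generating $\mu$ on $\Gamma$ with $\Gamma \acts V$ strongly irreducible, the $\Gamma$-action on $X = V/\Lambda$ is $\mu$-nice, i.e. it satisfies the two bullet conditions in Definition \ref{nice}.

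First I would check the first bullet. Theorem \ref{BQ} applies verbatim under the stated hypotheses (finitely supported generating $\mu$, strongly irreducible $\Gamma \acts V$), and it asserts precisely that every $\mu$-stationary \emph{ergodic} Borel probability measure on $X$ is either the Haar measure $m_X$ or the counting measure on a finite $\Gamma$-orbit. That is exactly the first bullet of Definition \ref{nice}, so nothing more is needed there.

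Next I would establish the second bullet: countably many finite $\Gamma$-orbits, each consisting of finite-order elements. This is where a subtlety arises, because Proposition \ref{BQ1} is stated under the hypothesis that $\overline{\Gamma}^{Z}$ is Zariski-connected and acts irreducibly on $V$, whereas Corollary \ref{niceness} only assumes $\Gamma$ acts strongly irreducibly. The remedy is to pass to a finite-index subgroup: by a standard fact (a linear algebraic group has finitely many connected components, and $\Gamma$ is Zariski-dense in $\overline{\Gamma}^{Z}$), the subgroup $\Gamma_1 := \Gamma \cap (\overline{\Gamma}^{Z})^{\circ}$ has finite index in $\Gamma$ and has Zariski closure equal to the connected group $G_1 := (\overline{\Gamma}^{Z})^{\circ}$. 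Strong irreducibility of $\Gamma \acts V$ means precisely that $\Gamma_1$, being finite-index, still acts irreducibly, hence $G_1$ acts irreducibly. Thus $\Gamma_1$ and $G_1$ satisfy the hypotheses of Proposition \ref{BQ1} (with $\Gamma$ there replaced by $\Gamma_1$). Applying that proposition to $\Gamma_1$ gives: $\Gamma_1$ acts strongly irreducibly, every non-trivial $\Gamma_1$-invariant subgroup of $\Lambda$ has finite index, there are countably many finite $\Gamma_1$-orbits in $X$, and each finite $\Gamma_1$-orbit consists of finite-order elements. Since $[\Gamma : \Gamma_1] < \infty$, every finite $\Gamma$-orbit splits into finitely many $\Gamma_1$-orbits, each finite; conversely a $\Gamma$-orbit containing a finite $\Gamma_1$-orbit is itself finite only if it is a finite union of such, so the finite $\Gamma$-orbits are governed by finite $\Gamma_1$-orbits and there are still only countably many. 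Likewise an element in a finite $\Gamma$-orbit lies in a finite $\Gamma_1$-orbit (its $\Gamma_1$-orbit is a subset of its $\Gamma$-orbit, hence finite), so it has finite order. This gives the second bullet for $\Gamma$.

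The main obstacle is precisely this passage from the strongly irreducible hypothesis of the corollary to the connected-Zariski-closure hypothesis of Proposition \ref{BQ1}: one must be careful that the finite-index subgroup $\Gamma_1$ used to invoke Proposition \ref{BQ1} indeed has connected Zariski closure and still acts irreducibly, and that conclusions about $\Gamma_1$-orbits transfer back to $\Gamma$-orbits without loss. Once that bookkeeping is in place, the corollary follows by simply quoting Theorem \ref{BQ} for the first bullet and Proposition \ref{BQ1} (applied to $\Gamma_1$) for the second. I would then remark that in the application to Proposition \ref{prop4} the relevant group is $\Gamma = \Ad(\SL_d(\bZ))$ acting on $V = M_d^0(\bR)$, whose Zariski closure is $\PGL_d$ acting by conjugation — connected and irreducible on the trace-zero matrices for $d \geq 2$ — so the hypotheses are met and Proposition \ref{prop4} is the special case $\rho = \Ad$, $\Lambda = M_d^0(\bZ)$ of Corollary \ref{niceness}.
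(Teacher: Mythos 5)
Your proposal is correct, and at its core it is the same two-step argument as the paper: quote Theorem \ref{BQ} for the first bullet of Definition \ref{nice}, and Proposition \ref{BQ1} for the second. The difference is that the paper's proof simply cites Proposition \ref{BQ1} as is, even though that proposition is stated under the hypothesis that $\overline{\Gamma}^{Z}$ is Zariski-connected and acts irreducibly, which is not literally what Corollary \ref{niceness} assumes; the paper gets away with this because in its only application (Proposition \ref{prop4}) the connectedness and irreducibility of the Zariski closure are verified directly in the subsequent corollary. You instead patch this mismatch inside the proof: passing to $\Gamma_1 = \Gamma \cap (\overline{\Gamma}^{Z})^{\circ}$, noting it has finite index with Zariski closure the connected group $(\overline{\Gamma}^{Z})^{\circ}$, using strong irreducibility to see that $\Gamma_1$, hence its closure, acts irreducibly, applying Proposition \ref{BQ1} to $\Gamma_1$, and then transferring countability of finite orbits and finiteness of order back to $\Gamma$ via the finite-index inclusion. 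This buys a corollary that is self-contained under its stated hypothesis of strong irreducibility alone, at the cost of the (routine but necessary) bookkeeping about connected components and orbit decompositions; the paper's version is shorter but implicitly leans on the hypotheses of Proposition \ref{BQ1} being available in context. Your closing remark about $\Ad(\SL_d(\bZ))$ matches the paper's final corollary (the paper phrases the Zariski closure as $\PSL_d(\bR)$ via Borel density rather than $\PGL_d$, an immaterial difference for connectedness and irreducibility on trace-zero matrices).
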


\begin{proof}
By Theorem \ref{BQ}, a $\mu$-stationary and \emph{ergodic} Borel probability measure on $X$ is
either the Haar measure $m_X$ on $X$ or the counting probability measure on a finite $\Gamma$-orbit. 
By Proposition \ref{BQ1}, there are (at most) countably many finite $\Gamma$-orbits in $X$, and
each element in a finite $\Gamma$-orbit has finite order. 
\end{proof}

The following corollary, in combination with Corollary \ref{niceness}, proves Proposition \ref{prop4}.

\begin{corollary}
The action of $\Ad(\SL_d(\bZ))$ on $M_d^0(\bR)$ is strongly irreducible.
\end{corollary}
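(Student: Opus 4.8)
The plan is to verify the hypothesis of Proposition \ref{BQ1} for $\Gamma = \Ad(\SL_d(\bZ))$ and $V = M_d^0(\bR)$: namely, that the Zariski closure $\overline{\Ad(\SL_d(\bZ))}^Z$ in $\GL(M_d^0(\bR))$ is a Zariski-connected group which acts irreducibly on $M_d^0(\bR)$. Strong irreducibility of the $\Ad(\SL_d(\bZ))$-action is then an immediate consequence of that proposition.

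First I would identify the Zariski closure. By the Borel density theorem, the lattice $\SL_d(\bZ)$ is Zariski-dense in $\SL_d(\bR)$ (the group $\SL_d(\bR)$ having no nontrivial compact factors). Since $\Ad$ is a morphism of algebraic groups defined over $\bQ$ — the inverse being polynomial on $\SL_d$ — it carries Zariski-dense subsets to Zariski-dense subsets of its image, and the image $\Ad(\SL_d(\bR))$ of the Zariski-connected group $\SL_d$ is a Zariski-closed, Zariski-connected subgroup of $\GL(M_d^0(\bR))$. Hence $\overline{\Ad(\SL_d(\bZ))}^Z = \Ad(\SL_d(\bR))$ is Zariski-connected, and it remains only to show that this group acts irreducibly on $M_d^0(\bR)$.

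For irreducibility I would first simplify the action. Since $g \mapsto (g^t)^{-1}$ is an automorphism of $\SL_d(\bR)$, setting $h = (g^t)^{-1}$ shows that the operators $A \mapsto \Ad(g)A = hAh^{-1}$ are precisely the conjugations by elements $h \in \SL_d(\bR)$; thus $\Ad(\SL_d(\bR))$ acts on $M_d^0(\bR) = \gsl_d(\bR)$ through the adjoint representation of the connected group $\SL_d(\bR)$. A subspace of $M_d^0(\bR)$ invariant under conjugation by the connected group $\SL_d(\bR)$ is invariant under $\ad(\gsl_d(\bR))$, i.e. it is an ideal of $\gsl_d(\bR)$; as $\gsl_d(\bR)$ is a simple Lie algebra for every $d \geq 2$, the only such subspaces are $\{0\}$ and $M_d^0(\bR)$. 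This establishes irreducibility, and applying Proposition \ref{BQ1} finishes the proof.

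I do not expect a genuine obstacle: all ingredients are standard. The only points requiring a little care are the invocation of the Borel density theorem and the claim that $\overline{\Ad(\SL_d(\bZ))}^Z$ is exactly $\Ad(\SL_d(\bR))$ — hence Zariski-connected — rather than a possibly disconnected overgroup; this is precisely what makes Proposition \ref{BQ1} applicable.
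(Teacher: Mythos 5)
Your proposal is correct and takes essentially the same route as the paper: Borel density to identify the Zariski closure of $\Ad(\SL_d(\bZ))$ with the adjoint image of $\SL_d(\bR)$, Zariski-connectedness of that closure, irreducibility from simplicity of $\gsl_d(\bR)$, and then Proposition \ref{BQ1}. The only (harmless, and shared with the paper, which writes $\PSL_d(\bR)$) imprecision is that for even $d$ the image $\Ad(\SL_d(\bR))$ need not itself be Zariski closed among real points — the closure is the full adjoint group $\PGL_d(\bR)$ — but that closure is still Zariski-connected and still acts irreducibly, so the application of Proposition \ref{BQ1} is unaffected.
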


\begin{proof}
Let $\Gamma_o$ be a finite-index subgroup of $\Gamma = \Ad(\SL_d(\bZ))$ and let $V = M_d^0(\bR)$. 
We note that in this case, the Zariski closure $G := \overline{\Gamma_o}^{\bZ}$ equals $\PSL_d(\bR)$ 
by the Borel Density Theorem \cite{F}, which is Zariski-connected (since it is algebraically simple) and it
acts irreducibly on $V$. Indeed, any linear subspace of $M_d^0(\bR) \cong \gsl_d(\bR)$, which is invariant under the adjoint representation,  
is an ideal in $\gsl_d(\bR)$. Since $\gsl_d(\bR)$ is simple as a Lie algebra, we see that the adjoint 
action is irreducible. By Proposition \ref{BQ1}, this
shows that $\Gamma$ acts strongly irreducibly.
\end{proof}

\section{Acknowledgements}

The authors are grateful to Yv\'es Benoist for enlightening and very encouraging discussions. The second 
author would also like to thank the Department of Mathematical Sciences at Chalmers University, Gothenburg,
for their hospitality during the time this paper was written.

\end{document}